\newtheorem{theorem}{Theorem}[section]
\newtheorem{lemma}[theorem]{Lemma}
\newtheorem{proposition}[theorem]{Proposition}
\newtheorem{corollary}[theorem]{Corollary}
\newtheorem{remark}[theorem]{Remark}
\def\a{\alpha}
\def\d{\delta}
\def\r{\rho}
\newcommand{\nn}{n}
\newcommand{\RR}{\mathbb{R}}
\newcommand{\e}{\varepsilon}
\newcommand{\su}{\subseteq}
\newcommand{\sm}{\setminus}
\newcommand{\D}{\Delta}
\newcommand{\C}{\mathbb{C}}
\newcommand{\R}{\mathbb{R}}
\newcommand{\Z}{\mathbb{Z}}
\newcommand{\Acal}{\mathcal{A}}
\newcommand{\Ccal}{\mathcal{C}}
\newcommand{\Scal}{\mathcal{S}}
\newcommand{\chim}{\chi_{\rm m}}
\newcommand{\lm}[1]{\lambda(#1)}
\newcommand{\olm}[1]{\overline{\lambda}(#1)}
\newcommand{\ud}{\overline{\delta}}
\newcommand{\one}{{\bf 1}}
\DeclareMathOperator{\ort}{{\rm O}}
\DeclareMathOperator{\diam}{{\rm diam}}
\title{Better bounds for planar sets avoiding unit distances}
\author{Tamás Keleti}
\address{Tamás Keleti, Institute of Mathematics,
Eötvös Loránd University,
Pázmány P. sétány 1/C,
1117 Budapest, Hungary}
\email{tamas.keleti@gmail.com}
\urladdr{http://www.cs.elte.hu/analysis/keleti}
\author{Máté Matolcsi}
\address{Máté Matolcsi, Alfréd Rényi Institute of Mathematics,
Hungarian Academy of Sciences POB 127 H-1364 Budapest, Hungary,
Tel: (+361) 483-8307, Fax: (+361) 483-8333}
\email{matomate@renyi.hu}
\author{Fernando Mário de Oliveira Filho}
\address{Fernando M. de Oliveira Filho, Instituto de Matemática e
  Estatística, Universidade de São Paulo, Rua do Matão 1010, 05508-090
  São Paulo/SP, Brazil}
\email{fmario@gmail.com}
\author{Imre Z. Ruzsa}
\address{Imre Z. Ruzsa, Alfréd Rényi Institute of Mathematics,
Hungarian Academy of Sciences POB 127 H-1364 Budapest, Hungary,
Tel: (+361) 483-8307, Fax: (+361) 483-8333}
\email{ruzsa@renyi.hu}
\date{October 26, 2015}
\thanks{Part of this research was done when T. Keleti was a visitor at
  the Alfréd Rényi Institute of Mathematics; he was also supported
  by OTKA grant no.~104178. M. Matolcsi and I.Z. Ruzsa were supported
  by OTKA no.~109789 and ERC-AdG 321104. F.M. de Oliveira Filho was
  partially supported by FAPESP project~\hbox{13/03447-6}.}
\subjclass{42B05, 52C10, 52C17, 90C05}
\keywords{Chromatic number of Euclidean space, distance-avoiding sets,
  linear programming, harmonic analysis}
\begin{document}

\begin{abstract}
  A $1$-avoiding set is a subset of~$\R^n$ that does not contain
  pairs of points at distance~$1$.  Let~$m_1(\R^n)$ denote the maximum
  fraction of~$\R^n$ that can be covered by a measurable $1$-avoiding set. We
  prove two results. First, we show that any $1$-avoiding set
  in~$\R^n$ ($n\ge 2$) that displays block structure (i.e., is made up of blocks
such that the distance between any two points from the same
block is less than $1$ and points from distinct blocks lie farther
than~$1$ unit of distance apart from each other)
has density strictly less than~$1/2^n$. For the
  special case of sets with block structure this proves a
  conjecture of Erd\H{o}s asserting that~$m_1(\R^2) < 1/4$. Second, we use linear
  programming and harmonic analysis to show that~$m_1(\R^2) \leq
  0.258795$.
\end{abstract}

\maketitle

\markboth{T. Keleti, M. Matolcsi, F.M. de Oliveira Filho, and
  I.Z. Ruzsa}{Better bounds for planar sets avoiding unit distances}


\section{Introduction}

The \textit{unit-distance graph} of~$\R^n$ is the graph whose vertex
set is~$\R^n$ and in which~$x$ and~$y$ are adjacent if~$\|x-y\| =
1$. A well-known problem in geometry, going back to Nelson and
Hadwiger (see Soifer~\cite{Soifer2009} for a historical survey), asks for
the chromatic number~$\chi(\R^n)$ of the unit-distance graph
of~$\R^n$.

A related problem considers independent sets of the
unit-distance graph. Let~$G = (V, E)$ be a graph. A set~$I \subseteq V$
is \textit{independent} if it does not contain a pair of adjacent
vertices. The \textit{independence number} of~$G$, denoted
by~$\alpha(G)$, is the maximal cardinality of an independent set.

A set~$A \subseteq \R^n$ is independent in the unit-distance graph if it
does not contain pairs of points at distance~$1$, that is,
$\|x - y\| \neq 1$ for all~$x$, $y \in A$. We also say that~$A$
\textit{avoids distance~$1$} or that it is a \textit{$1$-avoiding
  set}.  A possible measure
for the size of an independent set in this case is its density, that
is, the fraction of space that it covers (see~§\ref{sec:prelim} for a
rigorous definition). Our aim is to estimate the
maximal
(or more precisely the least upper bound of the)
fraction of~$\R^n$ that can be covered by a measurable set that avoids
distance~$1$.

We denote this maximal fraction by~$m_1(\R^n)$. In §\ref{sec:planar},
we show that~$m_1(\R^2) \leq 0.258795$. This result is related to a
conjecture of Erd\H{o}s (cf.~Székely~\cite{Szekely2002}), stating
that~$m_1(\R^2) < 1/4$.

As for the chromatic number of the
Euclidean plane, all that is known is that~$4 \leq \chi(\R^2) \leq 7$.
The upper bound comes from a simple periodic coloring of~$\R^2$,
whereas the lower bound comes from a finite subgraph of the
unit-distance graph, the Moser spindle (cf.~Moser and
Moser~\cite{MoserM1961}), whose chromatic number is~$4$ (see
Figure~\ref{fig:moser}).

 \begin{figure}[htb]
 \begin{center}
\includegraphics{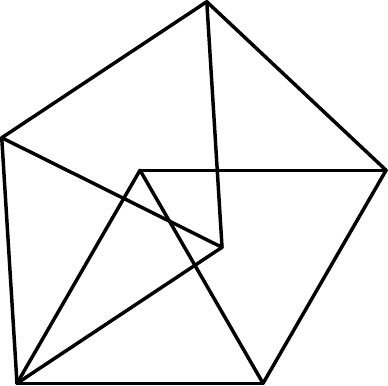}
\hskip2cm
\includegraphics{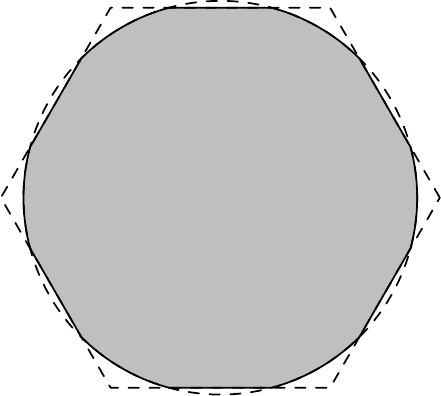}
\end{center}



\caption{On the left, the Moser spindle~\cite{MoserM1961}. Each
  segment has length exactly~$1$. On the right, the optimal tortoise
  in Croft's construction~\cite{Croft1967}.}
\label{fig:moser}
\end{figure}

In view of the difficulty of computing~$\chi(\R^n)$,
Falconer~\cite{Falconer1986} introduced the \textit{measurable chromatic
  number}, denoted by~$\chim(\R^n)$, in which the restriction is added
that the color classes must be Lebesgue-measurable sets. In other
words, one wishes to partition~$\R^n$ into the minimum possible number
of Lebesgue-measurable $1$-avoiding sets. Obviously, $\chim(\R^n) \geq
\chi(\R^n)$. Falconer proved that~$\chim(\R^2) \geq 5$. Since
\[
m_1(\R^n) \chim(\R^n) \geq 1,
\]
showing Erd\H{o}s' conjecture would give another proof
of Falconer's result.

A simple lower bound for~$m_1(\R^2)$ comes from the following
construction. Consider the hexagonal lattice with minimal vectors of
length~$2$ and place at each point of the lattice an open disk of
radius~$1/2$. This is a $1$-avoiding set of
density~$\pi / (8\sqrt{3}) = 0.2267\ldots$. A slight improvement was
given by Croft~\cite{Croft1967}. His construction is as follows.
Shrink the hexagonal lattice slightly so as to have minimal vectors of
length~$1+x$, where $x<1$, and place at each lattice point the
intersection of an open disk of radius~$1/2$ and an open regular
hexagon of height~$x$. The disks guarantee that inside each block
every distance is less than $1$, while the hexagons guarantee that
points from different blocks have distance greater than $1$.
Taking~$x=0.96553\ldots$ maximizes the density of the union of all the
blocks, showing that~$m_1(\R^2)\geq 0.22936$. This intersection of a
disk with a hexagon has often been called a \textit{tortoise};
Figure~\ref{fig:moser} shows an optimal tortoise.

Any finite subgraph~$G = (V, E)$ of the unit-distance graph of~$\R^n$
provides an upper bound for~$m_1(\R^n)$, namely~$\alpha(G) / |V|$;
this can be seen via a simple averaging argument.\footnote{This is
  related to the following observation: Let~$G$ be a subgraph of a
  finite vertex-transitive graph~$H$.  Then
  $\alpha(H) / |V(H)| \leq \alpha(G) / |V(G)|$.}  For the plane, we
then immediately have that~$m_1(\R^2) \leq 1/3$, as can be seen from
the equilateral triangle. A better bound of~$2/7$ is provided by the
Moser spindle; this is the best upper bound that has been obtained
from a finite subgraph.

Using further ideas, Székely~\cite{Szekely1984} proved a bound of
$\approx 0.279 < 2/7 \approx 0.285$. Oliveira and
Vallentin~\cite{OliveiraV2010} gave the currently best known upper
bound of $\approx 0.268$. Their method is based on a mix of linear
programming and harmonic analysis; it is a strengthening of it that
will be used in §\ref{sec:planar} to prove
that~$m_1(\R^2) \leq 0.258795$.

 Frankl and Wilson~\cite{FranklW1981} construct finite
subgraphs of the unit-distance graph of~$\R^n$ whose chromatic numbers
grow exponentially fast in~$n$. These same subgraphs can be used to
provide the asymptotic upper bound
\[
m_1(\R^n) \leq (1 + o(1)) 1.207^{-n}.
\]
via the averaging argument. The method of Oliveira and
Vallentin~\cite{OliveiraV2010} also provides an exponential upper
bound, $m_1(\R^n) \leq (1 + o(1))1.1654^{-n}$, which is somewhat
weaker than the bound of Frankl and Wilson.  A combination of both
arguments, requiring detailed analysis, was used by Bachoc, Passuello,
and Thiery~\cite{BachocPT2014} to obtain the best known asymptotic
upper bound
\[m_1(\R^n)\leq (1 + o(1)) 1.268^{-n}.
\] We remark here that the results of \cite{BachocPT2014} and the present paper do not overlap: the authors of \cite{BachocPT2014} obtain asymptotic upper bounds as $n\to \infty$, and some numerical improvements for dimensions $4\le n \le 24$.

The behavior of~$\chi(\R^n)$ under restrictions placed on the color
classes has also been studied. We have already mentioned Falconer's
measurable chromatic number. Other restrictions include requiring all
classes to be either open or closed sets --- in both cases it can be
shown that the chromatic number of~$\R^2$ is either~$6$ or~$7$
(cf.~Soifer~\cite{Soifer2009}).

Similarly, we may place some natural restrictions on $1$-avoiding sets and study their
maximal possible upper densities. In particular, we will consider sets with
block structure. We say that a set~$A$ has \textit{block structure} if
it is a union
\[
A = \bigcup_{i=0}^\infty A_i
\]
of \textit{blocks}~$A_i$, where~$\|x-y\| < 1$ if~$x$ and~$y$ belong to
the same block, and~$\|x-y\| > 1$ if~$x$ and~$y$ belong to different
blocks. Notice that we do not require $A$ to be measurable (but this will not make an essential difference;
cf. the proof of Theorem \ref{bm} below).

A set with block structure is clearly a $1$-avoiding set. All known
constructions of $1$-avoiding sets of ``high density'' are actually
constructions of sets with block structure, like  the
hexagonal lattice construction of disks, or Croft's construction. Recall that
Erd\H{o}s conjectured that~$m_1(\R^2) < 1/4$. Larman and Rogers, and
before them Moser (cf.~Larman and Rogers~\cite{LarmanR1972}), made the
following conjecture: the volume of a closed $1$-avoiding set inside a
ball of radius~$1$ in~$\R^n$ is less than~$1/2^n$ of the volume of the
ball. A simple argument shows that this conjecture
implies~$m_1(\R^n) < 1/2^n$, and it is therefore a generalization of
Erd\H{o}s' conjecture. In §\ref{sec:blocks} we will show that any
subset of~$\R^n$ ($n\ge 2$) with block structure has upper density less
than~$1/2^n$.

\subsection{Preliminaries and notation}
\label{sec:prelim}

Throughout the paper,~$\lm{A}$ and $\olm{A}$ will denote the Lebesgue measure of a
(measurable) set~$A$, and the outer measure of any set~$A$, respectively.

 Let~$A \subseteq \R^n$ be a measurable
set. We say that its \textit{density} is~$\delta(A)$ if for
all~$p \in \R^n$ we have
\[
\delta(A) = \lim_{r \to \infty} \frac{\lm{A \cap S(p, r)}}{\lm{S(p,
    r)}},
\]
where~$S(p, r)$ is the $n$-dimensional cube of side~$2r$ centered
at~$p$. For a set that has a density, the cube can be substituted by
any reasonable body, like the ball, say, without changing the
resulting density.

A measurable set may not have a density, but every set has an \textit{upper density}
\[
\ud(A) =  \limsup_{r \to \infty} \frac{\olm{A \cap
    S(p, r)}}{\lm{S(p, r)}}.
\]
We define
\[
m_1(\R^n) = \sup\{\, \ud(A) : \text{$A \subseteq \R^n$ is $1$-avoiding
  and measurable}\,\}.
\]

We say that a set~$A$ is \textit{periodic} if there is a
lattice~$L \subseteq \R^n$ that leaves~$A$ invariant, that
is,~$x + A = A$ for all~$x \in L$. Then~$L$ is the \textit{periodicity
  lattice} of~$A$.

Measurable periodic sets have densities. Moreover, a simple
argument shows that the densities of periodic $1$-avoiding sets can
come as close as desired to~$m_1(\R^n)$ (cf.~Oliveira and
Vallentin~\cite{OliveiraV2010}). So when computing upper bounds
for~$m_1(\R^n)$ we may restrict ourselves to periodic sets.


\section{Sets with block structure}
\label{sec:blocks}

In any dimension, all known examples of 1-avoiding sets of ``high
density'' are made up of disjoint blocks, i.e., they are sets of block
structure as defined in the introduction (see the end of this
section for an example of a 1-avoiding set of small positive density
which does not have block structure).

On the one hand it is natural to consider this class of sets because
human imagination of 1-avoiding sets seems to be more or less
restricted to this class. On the other hand, it seems very elusive to
prove rigorously that a 1-avoiding set of maximum or close to
maximum density must be block-structured.

The following theorem, for~$n = 2$, is a special case of the
conjecture of Erd\H{o}s (cf.~Székely~\cite{Szekely2002}) given in the
introduction; for~$n \geq 3$, it is a special case of a conjecture of
Larman, Rogers, and Moser (cf.~Larman and Rogers~\cite{LarmanR1972}),
also discussed in the introduction.

\begin{theorem}
\label{bm}
  Let $\nn\geq 2$ and let $A\subseteq\R^\nn$ be a 1-avoiding set having block structure. Then
  $\overline{\delta}(A)\leq 1/2^\nn-\e_\nn$ for some $\e_\nn>0$.
\end{theorem}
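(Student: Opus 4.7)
For each block $A_i$ introduce the closed thickening $A_i^* := A_i+\overline{B}(0,1/2)$. The strict $>1$ separation of distinct blocks forces the $A_i^*$ to be pairwise disjoint: if $x = a_i+b_i = a_j+b_j$ with $a_i \in A_i$, $a_j\in A_j$ and $\|b_i\|,\|b_j\|\leq 1/2$, then $\|a_i-a_j\|\leq 1$, contradicting the block gap. Brunn--Minkowski gives $|A_i^*|^{1/\nn}\geq|A_i|^{1/\nn}+\omega_\nn^{1/\nn}/2$, where $\omega_\nn$ denotes the volume of the unit ball in $\R^\nn$, while the isodiametric inequality applied to $A_i$ (of diameter at most $1$) gives $|A_i|\leq\omega_\nn/2^\nn$ and hence $|A_i|^{1/\nn}\leq\omega_\nn^{1/\nn}/2$. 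Combining the two yields $|A_i^*|\geq 2^\nn|A_i|$ for every $i$. Summing over blocks meeting a large cube of volume $V$ and using disjointness gives $\sum_i|A_i|\leq V/2^\nn+o(V)$, so $\ud(A)\leq 1/2^\nn$. The non-measurable case is handled by replacing each block by its closure, which has the same outer measure and preserves block structure up to a harmless relaxation of the $>1$ gap.

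To upgrade the bound to $\ud(A)\leq 1/2^\nn-\e_\nn$, the key observation is that equality in the chain $|A_i^*|\geq 2^\nn|A_i|$ forces $A_i$ to be a ball of radius $1/2$ (the equality cases in Brunn--Minkowski and in the isodiametric inequality coincide only when $A_i$ is a ball and $\diam(A_i)=1$), whence $A_i^*$ is a ball of radius $1$. But balls of radius $1$ cannot tile $\R^\nn$: their packing density is at most some $\Delta_\nn<1$. I would quantify this via a dichotomy: fix a small $\eta>0$ and split blocks into a ``small'' class ($|A_i|\leq(1-\eta)\omega_\nn/2^\nn$), for which the ratio $|A_i|/|A_i^*|$ is bounded strictly below $1/2^\nn$ by a direct computation, and a ``near-ball'' class ($|A_i|>(1-\eta)\omega_\nn/2^\nn$), for which a stability (Bonnesen-type) form of the isodiametric inequality forces $A_i$ into a small Hausdorff neighborhood of a ball of radius $1/2$. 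The near-ball thickenings $A_i^*$ are then nearly balls of radius $1$, and the sphere-packing bound caps the total volume contributed by them at $(\Delta_\nn+o(1))V$. Combining the two classes and optimizing over $\eta$ yields the strict bound with $\e_\nn>0$ depending only on $\nn$.

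The main obstacle is precisely this strict improvement. The core $\ud(A)\leq 1/2^\nn$ is immediate from the Brunn--Minkowski/isodiametric sandwich, but converting ``equality forces balls, balls cannot tile'' into a uniform gap requires either an effective stability estimate for the isodiametric inequality together with a packing bound for near-balls, or a compactness/Blaschke-selection argument ruling out sequences of block configurations with density tending to $1/2^\nn$. Both routes are technical but conceptually routine, and the resulting $\e_\nn$ will surely be very far from sharp.
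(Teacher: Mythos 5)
Your proposal is correct and follows essentially the same route as the paper: the Brunn--Minkowski/isodiametric sandwich on the thickened blocks $A_i+B_{1/2}$ for the weak bound $1/2^\nn$, then a dichotomy between blocks with non-negligible isodiametric deficit (whose volume ratio is strictly below $1/2^\nn$) and near-ball blocks, handled via a quantitative stability theorem for the isodiametric inequality (the paper uses the result of Maggi, Ponsiglione, and Pratelli) combined with the sphere-packing bound $\D_\nn<1$. The paper carries out the quantitative combination you sketch, and treats non-measurable blocks by passing to closures exactly as you indicate.
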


We will prove the following slightly stronger result.

\begin{theorem}
\label{bm2}
Let $n\ge 2$ and let
$A_1$, $A_2$, \dots\ $\subseteq \R^\nn$ be sets of diameter at most $1$
such that the distance of any two of them is at least $1$.
Then the upper density of $A=\bigcup_{i=1}^\infty A_i$ is
at most $1/2^\nn-\e_\nn$ for some $\e_\nn>0$.
\end{theorem}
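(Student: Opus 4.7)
The plan is to combine two sources of slack in the natural $1/2^\nn$ bound: the Brunn--Minkowski inequality together with the isodiametric inequality give exactly $1/2^\nn$, while the fact that the densest packing of unit balls in $\R^\nn$ ($\nn\ge 2$) has density $\Delta_\nn<1$ is what yields the extra $\e_\nn$.

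First I would establish the baseline bound $\ud(A)\le 1/2^\nn$. Set $A_i':=A_i+B(0,1/2)$. The hypothesis $d(A_i,A_j)>1$ forces $A_i'\cap A_j'=\emptyset$: a common point produces $a_i\in A_i$, $a_j\in A_j$ with $\|a_i-a_j\|<1$. Brunn--Minkowski gives $\olm{A_i'}^{1/\nn}\ge \olm{A_i}^{1/\nn}+\olm{B(0,1/2)}^{1/\nn}$, while the isodiametric inequality gives $\olm{A_i}\le \olm{B(0,1/2)}$, so $\olm{A_i'}\ge 2^\nn \olm{A_i}$. Blocks meeting a cube $S(p,r)$ have their inflations contained in $S(p,r+3/2)$, and disjointness then yields $2^\nn\sum_i\olm{A_i\cap S(p,r)}\le \lm{S(p,r+3/2)}$, which in the limit produces $\ud(A)\le 1/2^\nn$.

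To upgrade this to a strict bound, I would examine the equality case. Both the Brunn--Minkowski and isodiametric inequalities above are tight only when $A_i$ is a closed ball of radius $1/2$, and then $A_i'=B(c_i,1)$. If this happened for every block, the disjoint $A_i'$ would form a packing of unit balls, whose density is at most $\Delta_\nn<1$; coupled with $\lm{A_i'}=2^\nn\lm{A_i}$ this alone would give $\ud(A)\le \Delta_\nn/2^\nn<1/2^\nn$.

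The main obstacle is the intermediate regime in which most blocks are close to, but not exactly, balls of radius $1/2$. I would handle it by a contradiction/compactness argument: assume a sequence of periodic block-structured sets $A^{(k)}$ with $\ud(A^{(k)})\to 1/2^\nn$ (we may restrict to periodic sets by \S\ref{sec:prelim}). Asymptotic equality throughout the chain above forces both the isodiametric and the Brunn--Minkowski inequalities to be asymptotically tight for almost every block, and a quantitative stability version of isodiametric then forces those blocks to approach balls of radius $1/2$ in Hausdorff distance. Splitting blocks into ``good'' (volume within $\sigma$ of $\olm{B(0,1/2)}$) and ``bad'' (volume below that threshold), the bad ones pick up a uniform multiplicative gain strictly better than $1/2^\nn$ in the Brunn--Minkowski step, while the good ones, being almost balls of radius $1/2$, produce an almost-packing of unit balls and therefore occupy only a $\Delta_\nn/2^\nn+o(1)$ fraction of space. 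Either way the density stays bounded away from $1/2^\nn$, contradicting $\ud(A^{(k)})\to 1/2^\nn$ and yielding $\e_\nn>0$. The delicate point is the quantitative control of the ``good'' regime, where one needs isodiametric stability (or a defect version of Brunn--Minkowski for $A_i+B(0,1/2)$) to be uniform across the full sequence of blocks.
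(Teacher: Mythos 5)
Your proposal is correct and follows essentially the same route as the paper: the Brunn--Minkowski plus isodiametric baseline of $1/2^\nn$, a dichotomy between blocks with large and small isodiametric deficit, quantitative isodiametric stability (the Maggi--Ponsiglione--Pratelli theorem, which is exactly Lemma~\ref{t:stability}) to force the low-deficit blocks to be near-balls, and the sphere-packing bound $\Delta_\nn<1$ to extract the gain. The paper just runs this directly with explicit constants $\a,\r,\e_\nn$ instead of your compactness/contradiction framing (and your appeal to periodicity is unnecessary and not justified for block-structured sets, but it is also never actually used).
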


Observe that both properties (diameter at most $1$,  distance at least $1$)
are preserved if we replace the sets by their closure, so we may assume that
the sets $A_i$ are all closed. This also makes them measurable.
(This reduction works only under the assumption of block structure.
In the general case, it is fairly easy to show the existence of a
(non-measurable) 1-avoiding set of full outer measure, that is, such that the inner measure of
its complement is 0.)

First we present a simple proof for the weaker statement
when $\e_\nn$ is discarded, that is,
$\overline{\delta}(A)\leq 1/2^\nn$.

Let $B_r$ denote the open ball of radius $r$ around
the origin and let
\[
C_i=A_i+B_{1/2}=\{\,a+b : a\in A_i,\ b\in B_{1/2}\,\}.
\]
It is clear from the assumptions that $C_i\cap C_j =\emptyset$, for
all $i\neq j$.



Applying the Brunn-Minkowski inequality (see e.g. \cite[Theorem 4.1]{Gardner2002}) to the  sets
$ {A}_i$, $ {B}_{1/2}$ we obtain
$\lm{C_i}^{1/\nn}\geq \lm{ {A}_i}^{1/\nn}+\lm{B_{1/2}}^{1/\nn}$. Furthermore
 the
isodiametric inequality gives $\lm{ {A}_i}\leq \lm{B_{1/2}}$. By
combining these inequalities we get
\begin{equation}\label{eq:bm}
\frac{\lm{A_i}^{1/\nn}}{\lm{C_i}^{1/\nn}}\leq
\frac{\lm{ {A}_i}^{1/\nn}}{\lm{ {A}_i}^{1/\nn}+\lm{B_{1/2}}^{1/\nn}} =
\frac1{1+\left(\frac{\lm{B_{1/2}}}{\lm{ {A}_i}}\right)^{1/\nn}} \leq
\frac1{1+1}=
\frac12.
\end{equation}
Since the sets $C_i$ are pairwise disjoint, $A_i\su C_i$, and
$A=\bigcup_i A_i$, this shows that
$\overline{\delta}(A)\leq 1/2^\nn$.

The plan to show $\overline{\delta}(A)\leq 1/2^\nn-\e_\nn$ is the
following: If $\overline{\delta}(A)$ is close to $1/2^\nn$ then in the
above argument we must have that for most~$i$ the isodiametric
inequality is almost an equality and~$\diam A_i$ is close to~$1$. By a
stability theorem this implies that each such ${A_i}$ is very
close to a ball of radius~$1/2$ and then most of the sets
$C_i=A_i+B_{1/2}$ are very close to unit balls.  But the density of
any unit ball packing is well-separated from~$1$, and then so is the
density of~$\bigcup_i C_i$.  Since~$A$ has density at most~$1/2^\nn$
in~$\bigcup_i C_i$, this implies that the density of~$A$ is well
separated from $1/2^\nn$.

The stability result we use is the following theorem of Maggi,
Ponsiglione, and Pratelli~\cite{MaggiPP2014}.

\begin{lemma}\label{t:stability}
  Let $E\su \R^\nn$ be a measurable set with $\lm{E}>0$ and $\diam E=2$.
  Then there exist $x$, $y\in\R^\nn$ such that
$$
E \su B(x, 1+r) \qquad \textrm{and} \qquad B(y,1) \su E + B_r,
$$
where $B(z,R)$ denotes the ball centered at $z$ with radius $R$ and
$$
r=K_\nn \left(\frac{\lm{B_1}}{\lm{E}}-1\right)^{1/\nn}
$$
for some constant~$K_\nn$ that depends only on~$\nn$.
\end{lemma}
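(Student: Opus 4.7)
The plan is to reduce Lemma~\ref{t:stability} to a sharp stability version of the Brunn--Minkowski inequality. The classical derivation of the isodiametric inequality from Brunn--Minkowski applied to $E$ and $-E$ (noting $E - E \su \overline{B}_2$) yields
\[
2\lm{B_1}^{1/\nn} = \lm{B_2}^{1/\nn} \geq \lm{E-E}^{1/\nn} \geq \lm{E}^{1/\nn} + \lm{-E}^{1/\nn} = 2\lm{E}^{1/\nn},
\]
so $\lm{E} \leq \lm{B_1}$. The lemma is a quantitative strengthening of this: near-saturation of $\lm{E} \leq \lm{B_1}$ must force $E$ to be geometrically close to a translate of $B_1$.

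Setting $\eta = \lm{B_1}/\lm{E} - 1$, the chain above immediately yields both $\lm{B_2} - \lm{E-E} \leq C_\nn \eta \lm{E}$ (so $E - E$ nearly fills $B_2$) and $\lm{E-E}^{1/\nn} - 2\lm{E}^{1/\nn} \leq C_\nn' \eta^{1/\nn} \lm{E}^{1/\nn}$ (so Brunn--Minkowski is nearly sharp for the pair $E$, $-E$). At this point I would invoke a sharp quantitative Brunn--Minkowski inequality (of Figalli--Jerison or Figalli--Maggi--Pratelli type) to conclude that, after a suitable translation, $E$ is $L^1$-close to $B_1$: there exists $z \in \R^\nn$ with $\lm{E \D B(z, 1)} \leq C_\nn'' \eta^{1/\nn} \lm{B_1}$.

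The final step converts this $L^1$ closeness into the two geometric inclusions. For $E \su B(x, 1+r)$, any point of $E$ lying outside $B(z, 1+r)$ would, together with the ``missing mass'' of $E$ inside $B(z,1)$, either violate $\diam E = 2$ or exceed the $L^1$ budget; a careful averaging argument pins down the correct center $x$ close to $z$. For $B(y,1) \su E + B_r$, if some point $p$ in a slightly shrunken copy of $B(z,1)$ sat at distance at least $r$ from $E$, then $B(p, r)$ would contribute at least $c_\nn r^\nn$ to $\lm{B(z,1) \sm E}$; since this deficit is only of order $\eta^{1/\nn} \lm{B_1}$, taking $r = K_\nn \eta^{1/\nn}$ with $K_\nn$ sufficiently large yields a contradiction.

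The main obstacle is obtaining the sharp exponent $1/\nn$ on $r$. Elementary stability arguments (via Steiner symmetrization, or the weaker Brunn--Minkowski stability of Christ) produce only weaker polynomial bounds. Sharpness requires the deeper Figalli--Maggi--Pratelli-type stability for Brunn--Minkowski, whose proofs rest on optimal transport techniques; this sharp input is precisely what Maggi, Ponsiglione, and Pratelli develop and tailor to the isodiametric setting.
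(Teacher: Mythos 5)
The paper does not prove Lemma~\ref{t:stability} at all: it is imported verbatim as a theorem of Maggi, Ponsiglione, and Pratelli \cite{MaggiPP2014}, so there is no internal proof to match your argument against. Your proposal, for its part, is not an independent proof either, since you explicitly black-box the decisive quantitative input to ``what Maggi, Ponsiglione, and Pratelli develop.'' If the intended reading of the exercise is ``justify the lemma as the paper does,'' the correct answer is simply to cite \cite{MaggiPP2014}; if the intended reading is ``prove it,'' your sketch has genuine gaps.

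Concretely: (i) The sharp stability theorem for Brunn--Minkowski that you invoke (Figalli--Maggi--Pratelli type, with asymmetry controlled by a small power of the deficit) is a theorem about \emph{convex} bodies; for arbitrary measurable sets the known stability (Figalli--Jerison) carries much weaker, non-explicit exponents, so this route cannot deliver the exponent $1/\nn$ for a general measurable $E$. Moreover, even granting such stability for the pair $E$, $-E$, its conclusion is that $E$ is close to some convex set $K$ with $-E$ close to a translate of $K$; you still need the separate fact that $E-E$ nearly fills $B_2$ to force $K$ to be a ball, and that step is only gestured at. (ii) Your exponents do not close. From an $L^1$ bound $\lm{E \,\D\, B(z,1)}\le C\,\eta^{1/\nn}\lm{B_1}$, the ``empty ball'' argument for $B(y,1)\su E+B_r$ gives a contradiction only when $c_\nn r^\nn$ exceeds the deficit, i.e.\ when $r\gtrsim \eta^{1/\nn^2}$ --- not $\eta^{1/\nn}$. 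To reach $r\sim\eta^{1/\nn}$ this way you would need the $L^1$ distance to be \emph{linear} in $\eta$, a strictly stronger statement than what you derive. (iii) The first inclusion cannot be extracted from an $L^1$ bound by an ``averaging argument,'' since a single stray point of $E$ has measure zero; one must use the diameter constraint (e.g., once $B(y,1)\su E+B_r$ is known, $\diam E = 2$ forces $E\su B(y,1+r)$). Finally, note that the actual proof in \cite{MaggiPP2014} proceeds, as its title indicates, via the sharp quantitative \emph{isoperimetric} inequality rather than via Brunn--Minkowski stability, so your reduction is also not the route taken in the source being cited.
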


Note that for sets of diameter~$2$ the expression $\lm{B_1}/\lm{E}-1$, which
is called \textit{isodiametric deficit} by Maggi, Ponsiglione, and
Pratelli, is nonnegative by the isodiametric inequality and expresses
the error in that inequality.

We will need the following simple corollary of the above result.

\begin{corollary}\label{c:stability}
  For any $\nn\geq 2$ there exists an increasing function
  $\beta = \beta_\nn\colon (0,\infty)\to(0,\infty)$ with
  $\lim_{\r\to 0}\beta_\nn(\r)=0$ with the following property.  For
  every measurable $E\su\R^\nn$ with $\lm{E}>0$ and $\diam E\leq 1$ there
  exist $x$, $y\in\R^\nn$ such that
$$
E \su B(x, 1/2 + \beta(\r(E))) \qquad \textrm{and} \qquad
B(y,1/2) \su E + B_{\beta(\r(E))},
$$
where
$$
\r(E)=\frac{\lm{B_{1/2}}}{\lm{E}}-1.
$$
\end{corollary}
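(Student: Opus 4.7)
The plan is to reduce to Lemma~\ref{t:stability}, which requires $\diam = 2$, by rescaling. Let $d = \diam E$; since $\lm{E} > 0$ we have $d > 0$, so $E' = (2/d)E$ has diameter exactly $2$ and positive measure. Applying the lemma to $E'$ produces $x'$, $y' \in \R^\nn$ with $E' \su B(x', 1+r')$ and $B(y', 1) \su E' + B_{r'}$, where $r' = K_\nn(\lm{B_1}/\lm{E'} - 1)^{1/\nn}$. Using $\lm{E'} = (2/d)^\nn \lm{E}$, a direct computation gives
\[
\frac{\lm{B_1}}{\lm{E'}} - 1 = d^\nn(1+\r) - 1 \le \r,
\]
where $\r = \r(E)$ and the inequality uses $d \le 1$; in particular $r' \le K_\nn \r^{1/\nn}$.

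Rescaling by $d/2$ turns the first inclusion into $E \su B(x, d/2 + dr'/2)$ for $x = (d/2)x'$, which is contained in $B(x, 1/2 + r'/2)$ since $d \le 1$. The second inclusion is subtler: rescaling gives only $B(y, d/2) \su E + B_{dr'/2}$, whereas we need a ball of radius $1/2$ on the left. The remedy is the elementary inclusion $B(y, 1/2) \su B(y, d/2) + B_{(1-d)/2}$, which yields
\[
B(y, 1/2) \su E + B_{dr'/2 + (1-d)/2}.
\]
This matches the required form provided we can control $1 - d$ in terms of $\r$.

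The only real obstacle is precisely this last control, and it comes for free from the isodiametric inequality applied to $E$ itself: since $\diam E = d$, we have $\lm{E} \le \lm{B_{d/2}} = d^\nn \lm{B_{1/2}}$, hence $1 + \r \ge 1/d^\nn$, i.e., $d \ge (1+\r)^{-1/\nn}$. Therefore $1 - d \le 1 - (1+\r)^{-1/\nn}$, which tends to $0$ as $\r \to 0$. Setting
\[
\beta_\nn(\r) = \tfrac{1}{2} K_\nn \r^{1/\nn} + \tfrac{1}{2}\bigl(1 - (1+\r)^{-1/\nn}\bigr)
\]
defines an increasing function $(0,\infty) \to (0,\infty)$ with $\lim_{\r \to 0}\beta_\nn(\r) = 0$, and by the estimates above both required inclusions hold with this $\beta$.
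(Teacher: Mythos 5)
Your proof is correct and follows the same route as the paper: rescale to apply Lemma~\ref{t:stability}, bound the resulting radius by $\tfrac{K_\nn}{2}\r^{1/\nn}$, and use the isodiametric inequality to control $1-\diam E$ in terms of $\r$, arriving at the identical function $\beta_\nn$. The only cosmetic difference is that you rescale $E$ to $E'$ explicitly and then rescale back, whereas the paper states the rescaled form of the lemma directly; the estimates are term-for-term the same.
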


\begin{proof}
  By rescaling Lemma~\ref{t:stability} we get that for any
  measurable $E\su\R^\nn$ with $\lm{E}>0$ and $\diam E < \infty$ there
  exist $x,y\in\R^\nn$ such that
\begin{equation}\label{eq:rescaled}
E \su B\left(x, \frac{\diam E}2+r\right)
\qquad \textrm{and} \qquad
B\left(y,\frac{\diam E}2\right) \su E + B_r,
\end{equation}
where
$$
r=K_\nn \frac{\diam E}2\left(\frac{\lm{B_{(\diam E)/2}}}{\lm{E}}-1\right)^{1/\nn}.
$$

If $\diam E\leq 1$ then
\begin{equation}\label{eq:r}
r=K_\nn \frac{\diam E}2\left(\frac{\lm{B_{(\diam E)/2}}}{\lm{E}}-1\right)^{1/\nn} \leq
\frac{K_\nn}2 \r(E)^{1/\nn},
\end{equation}
so
\begin{equation}\label{eq:contained}
E \su B\left(x, \frac{\diam E}2+r\right)
\su B\left(x, \frac12+\frac{K_\nn}2 \r(E)^{1/\nn}\right).
\end{equation}

By the isodiametric inequality we have $\lm{E}\leq \lm{B_{(\diam E)/2}}$, so
$$
\r(E)=\frac{\lm{B_{1/2}}}{\lm{E}}-1\geq
\frac{\lm{B_{1/2}}}{ \lm{B_{(\diam E)/2}}}-1
=\frac1{(\diam E)^\nn}-1,
$$
hence
\begin{equation}\label{eq:diam}
\diam E \geq \left(\frac1{1+\rho(E)}\right)^{1/\nn}.
\end{equation}

From the second part of (\ref{eq:rescaled}), using $B(z,a)+B_b=B(z,a+b)$
we get that
\begin{equation}\label{eq:contains}
B(y,1/2) \su E + B_{r+(1-\diam E)/2}.
\end{equation}

Then (\ref{eq:r}), (\ref{eq:contained}), (\ref{eq:diam}) and
(\ref{eq:contains}) show that the function
$$
\beta_\nn(\r)=\frac{K_\nn}2 \r^{1/\nn} +
\frac{1}{2}\left(1-\left(\frac1{1+\r}\right)^{1/\nn}\right)
$$
has all the required properties.
\end{proof}

Now we  prove Theorem~\ref{bm2}.

\begin{proof}[Proof of Theorem~\ref{bm2}]
We fix the dimension~$\nn$; all constants may depend on it. We will
show that if~$N$ is large enough then the density of $A$ in
$[-N,N]^\nn$ is at most $1/2^\nn-\e_\nn$ for some positive $\e_\nn$.

Suppose without loss of generality that the blocks $A_i$ are
enumerated so that $A_1$,~\dots,~$A_m$ are the ones that have nonempty
intersection with $[-N,N]^\nn$ and so
\begin{equation}\label{eq:n}
A\cap [-N,N]^\nn = \bigcup_{i=1}^m (A_i \cap [-N,N]^\nn).
\end{equation}
 Let
$$
\r_i=\r({A}_i)=\frac{\lm{B_{1/2}}}{\lm{{A}_i}}-1.
$$
Then using (\ref{eq:bm}) we get
\begin{equation}\label{eq:rho}
\frac{\lm{A_i}^{1/\nn}}{\lm{C_i}^{1/\nn}}\leq
\frac1{1+\left(\frac{\lm{B_{1/2}}}{\lm{{A}_i}}\right)^{1/\nn}}=
\frac1{1+(\r_i+1)^{1/\nn}}.
\end{equation}

Let $\a$ and $\r$ be two small positive constants that will be
specified later.  Let
$$
I=\{\, i\in\{1,\ldots,n\} : \r_i \geq \r\, \} \qquad \textrm{and} \qquad
J = \{1,\ldots,n\} \sm I.
$$

By~\eqref{eq:rho}, for every $i\in I$ we have
$$
\lm{A_i} \leq \lm{C_i} \left(\frac1{1+(\r+1)^{1/\nn}}\right)^\nn.
$$
By (\ref{eq:bm}), for every $i \in J$ we have
$\lm{A_i} \leq \lm{C_i} / 2^\nn$.  Therefore we have
\begin{align*}
\lm{A\cap [-N,N]^\nn} &\leq
\sum_{i\in I} \lm{C_i} \left(\frac1{1+(\r+1)^{1/\nn}}\right)^\nn
+ \sum_{i\in J} \frac{\lm{C_i}}{2^\nn}\\
&\leq \sum_{i=1}^m \frac{\lm{C_i}}{2^\nn} - \sum_{i\in I} \lm{C_i}
\left(\frac1{2^\nn}-\left(\frac1{1+(\r+1)^{1/\nn}}\right)^\nn\right) .
\end{align*}

First we consider the case when
$$
\sum_{i\in I} \lm{C_i} \geq \a (2N)^\nn.
$$
Using that the sets $C_i$ are pairwise disjoint and
$\bigcup_{i=1}^m C_i \su [-N-2,N+2]^\nn$, we get
\begin{equation}\label{eq:case1}
\frac{\lm{A\cap[-N,N]^\nn}}{\lm{[-N,N]^\nn}} \leq
\frac{(2(N+2))^\nn}{(2N)^\nn} \frac1{2^\nn} -
\a\left(\frac1{2^\nn}-\left(\frac1{1+(\r+1)^{1/\nn}}\right)^\nn\right).
\end{equation}

Next consider the case when
\begin{equation}\label{eq:case2}
\sum_{i\in I} \lm{C_i} < \a (2N)^\nn.
\end{equation}
By definition for any $i\in J$ we have $\r({A}_i)<\r$.  By
Corollary~\ref{c:stability} this implies that for any $i\in J$ there
exist $x_i$ and $y_i$ such that
\begin{equation}\label{eq:inout}
{A}_i \su B(x_i, 1/2+\beta(\r)) \qquad \textrm{and} \qquad
B(y_i,1/2) \su {A}_i + B_{\beta(\r)}.
\end{equation}
Since $\lim_{\r\to 0}\beta(\r)=0$ we
can guarantee $\beta(\r)<1/2$ by taking $\r$ small enough.

Note that the first part of (\ref{eq:inout}) implies
\begin{equation}\label{eq:1plusbeta}
C_i = A_i + B_{1/2} \su B(x_i,1+\beta(\r))
\end{equation}
and the second part of (\ref{eq:inout}) implies
\begin{equation}\label{eq:1minusbeta}
B(y_i,1-\beta(\r)) \su A_i + B_{1/2} = C_i.
\end{equation}
Since the sets $C_i$ are pairwise disjoint, this implies that the balls
$B(y_i,1-\beta(\r))$, for~$i\in J$, are pairwise disjoint.

Let $\D_\nn$ be an upper bound on the density of the union of disjoint
balls of the same size in $\R^\nn$.  Although the best $\D_\nn$ is not
known for general~$\nn$, it is known
that $\D_\nn<1$ for $\nn\geq 2$.


Thus the density of $\bigcup_{i\in J} B(y_i,1-\beta(\r))$ in
$[-N-2,N+2]^\nn$ is at most $\D_\nn$, that is,
\begin{equation}
\label{eq:delta-bound}
\frac{1}{(2(N+2))^\nn}\sum_{i \in J} \lm{B(y_i, 1 - \beta(\r))} \leq \Delta_\nn.
\end{equation}

Notice
that~$\lm{B(x_i, 1 + \beta(\rho))} / \lm{B(y_i, 1 - \beta(\rho))} = ((1 +
\beta(\rho)) / (1 - \beta(\rho)))^\nn$.
Let~$\Acal_J = \bigcup_{i\in J} A_i$.
Then, using first \eqref{eq:bm} then \eqref{eq:1plusbeta}
and finally \eqref{eq:delta-bound}
we get that
\begin{equation}\label{eq:Jterms}
\begin{split}
\frac{\lm{\Acal_J}}{\lm{[-N,N]^\nn}} \leq \frac{1}{(2N)^\nn}\sum_{i \in J} \frac{\lm{C_i}}{2^\nn}
&\leq \frac{1}{(2N)^\nn}\sum_{i \in J} \frac{\lm{B(x_i, 1 + \beta(\rho))}}{2^\nn}\\
&\leq \frac{(2(N+2))^\nn}{(2N)^\nn} \cdot \Delta_\nn \cdot
\left(\frac{1+\beta(\rho)}{1-\beta(\rho)}\right)^\nn \cdot
\frac{1}{2^\nn}.
\end{split}
\end{equation}

Let~$\Acal_I = \bigcup_{i \in I} A_i$. By (\ref{eq:bm}) and
(\ref{eq:case2}) we have
\begin{equation}\label{eq:Iterms}
\frac{\lm{\Acal_I}}{\lm{[-N,N]^\nn}} \leq \frac{\a}{2^\nn}.
\end{equation}
Combining (\ref{eq:Jterms}) and (\ref{eq:Iterms}) we get in
this case
\begin{equation}\label{eq:case2estimate}
\frac{\lm{A\cap[-N,N]^\nn}}{\lm{[-N,N]^\nn}} \leq
\frac{(2(N+2))^\nn}{(2N)^\nn}\cdot \D_\nn
\cdot \left(\frac{1+\beta(\r)}{1-\beta(\r)}\right)^\nn \cdot \frac1{2^\nn} + \frac{\a}{2^\nn}.
\end{equation}

Finally, choose the positive constants $\a, \r$ and $\e_\nn$ so that
$\beta(\r)<1/2$,
$$
\D_\nn\cdot\left(\frac{1+\beta(\r)}{1-\beta(\r)}\right)^\nn  + \a + 2^\nn \e_\nn< 1
$$
and
$$
\e_\nn \leq \a\left(\frac1{2^\nn}-\left(\frac1{1+(\r+1)^{1/\nn}}\right)^\nn\right).
$$
Then by (\ref{eq:case1}) and (\ref{eq:case2estimate}) we get that
$\overline{\d}(A) \leq 1 / 2^\nn-\e_\nn$ in both cases, which completes
the proof.
\end{proof}

The theorem above suggests that one should try to prove that any
1-avoiding set of ``high density'' must have block structure. A
natural idea is to take any 1-avoiding set~$A$ and try to modify it in
some manner to obtain a new 1-avoiding set~$\tilde A$ having block
structure and at least the same density as~$A$. Unfortunately, we
could not prove anything rigorous along these lines.

We end this section by presenting an example of a 1-avoiding set of
positive (but small) density which does not have block-structure.
This example also shows that not every 1-avoiding set of positive
density can be modified in a natural way to obtain a new 1-avoiding
set that has block structure and larger or equal density.

Consider the scaled integer lattice $(c\Z)^2\subseteq \RR^2$ with
$c=2\sqrt{2}-2$, and place an open disk of radius $r=(3-2\sqrt{2})/2$
at each lattice point. It is easy to check that the distance between
points of disks around adjacent lattice points is less than $1$ and
the distance between points of disks around nonadjacent lattice
points is bigger than $1$.  Therefore this is a 1-avoiding set without
block structure.
 Simple calculation shows that the the density of this
example is $\delta=r^2 \pi/ c^2\approx 0.0337$. (A somewhat higher density could be achieved with a similar construction using the hexagonal lattice instead of the integer lattice.)


\section{A better upper bound in the plane}
\label{sec:planar}

We now show how a strengthening of the method of Oliveira and
Vallentin~\cite{OliveiraV2010} can be used to provide the bound below.

\begin{theorem} \label{numeric}
\[
m_1(\R^2) \leq 0.258795.
\]
\end{theorem}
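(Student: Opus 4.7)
The plan is to follow and strengthen the linear programming / harmonic analysis framework of Oliveira and Vallentin~\cite{OliveiraV2010}. As observed in the preliminaries, it suffices to bound~$\delta(A)$ over measurable periodic 1-avoiding sets; fix such an~$A$ with periodicity lattice~$L$. Expanding~$\chi_A$ into its Fourier series on the torus~$\R^2/L$ gives~$\delta(A) = \widehat{\chi_A}(0)$, while $\chi_A(x)\chi_A(y) = 0$ for $\|x-y\|=1$ forces the autocorrelation of~$\chi_A$ to vanish on the unit circle. Standard LP duality then reduces the problem to finding a continuous radial function~$f\colon\R^2\to\R$ whose Fourier transform is a nonnegative measure, satisfying~$f(x)\leq 0$ for~$\|x\|=1$ together with a suitable normalization at the origin; the value~$f(0)$ then delivers an upper bound for~$m_1(\R^2)$. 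Averaging over the orthogonal group~$\ort(2)$ lets one expand~$f$ in a Fourier-Bessel basis, turning this into an infinite LP in nonnegative weights whose optimum is the Oliveira-Vallentin bound~$\approx 0.268$.

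To push the bound below~$0.268$ I would enlarge the LP by adding constraints coming from higher-order forbidden configurations. For any finite configuration~$T\subseteq\R^2$ that cannot lie entirely inside a 1-avoiding set (for instance, the three vertices of a unit equilateral triangle, or, more generally, small subgraphs of the unit-distance graph such as the Moser spindle), the product~$\prod_{x\in T}\chi_A(x)$ must vanish identically. Averaging such products over all isometries of~$\R^2$ gives extra linear inequalities that tie higher correlations of~$\chi_A$ to its two-point correlation, and dualizing again yields a strictly stronger program than that of~\cite{OliveiraV2010}. A second source of strengthening is to impose the pointwise negativity condition~$f(x)\leq 0$ not only on the unit circle but also on a larger set where one can prove the corresponding product vanishes after averaging, shaping~$f$ more aggressively.

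The final step is to pass from the infinite-dimensional LP to a finite one: truncate the Fourier-Bessel expansion at a sufficiently high frequency, sample the pointwise inequalities on a fine enough grid of radial test points, and solve the resulting finite LP with a numerical solver. The main obstacle will be rigorous certification of the numerical value: one must combine a quantitative argument showing that the truncated LP approximates the continuous one with arbitrarily small loss, with an a~posteriori verification of the computed dual certificate (for instance via rational rounding of the solver output and a final interval-arithmetic check of all constraints), so that $m_1(\R^2)\leq 0.258795$ is established exactly rather than merely numerically.
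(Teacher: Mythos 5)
Your first paragraph correctly reproduces the Oliveira--Vallentin framework that the paper also starts from (periodic sets, autocorrelation, positive-definiteness, radialization into a Fourier--Bessel expansion, LP duality, numerical certification of the dual witness). But the two ingredients that actually bring the bound from $\approx 0.268$ down to $0.258795$ are missing from your strengthening step, and what you propose in their place is weaker. First, for a finite subgraph $G=(V,E)$ of the unit-distance graph the paper does not merely use that ``the product $\prod_{x\in V}\chi_A(x)$ vanishes'' (which only says that not all of $V$ can lie in $A$); it uses the full independence-number constraint $\sum_{x\in V} f(x)\le \alpha(G)\,f(0)$, proved by observing that any point lies in at most $\alpha(G)$ of the translates $A-x$, $x\in V$. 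For the Moser spindle this gives the coefficient $\alpha(G)=2$ rather than $6$, and this difference matters quantitatively. Second, and more importantly, your proposal contains nothing corresponding to the inclusion--exclusion (Bonferroni) constraints: for an \emph{arbitrary} finite set $C$ (not a forbidden configuration), $\delta\bigl(\bigcup_{x\in C}(A-x)\bigr)\le 1$ combined with the union lower bound yields
\[
\sum_{\{x,y\}\in\binom{C}{2}} f(x-y)\;\ge\; |C|\,f(0)-1 .
\]
This constraint is inhomogeneous in $f$, and when paired with the identity $\widehat f(0)=\delta(A)^2$ (equivalently $\kappa(0)=\delta^2$, which you also never invoke) it turns the dual bound into the root of a quadratic equation in $\delta$ rather than a linear one. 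Without these constraints the method, even with Moser spindles added, stalls around $0.263$ and cannot certify $0.258795$.

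Two smaller issues: your suggestion to impose $f(x)\le 0$ on a set larger than the unit circle has no justification --- the autocorrelation of a $1$-avoiding set need not vanish (or be small) anywhere off the unit sphere, so there is no ``larger set where the product vanishes after averaging.'' And as written the proposal is a search strategy rather than a proof: the statement is a specific numerical inequality, so a complete argument must exhibit the explicit graphs, point configurations, and dual weights, and then verify nonnegativity of the resulting Bessel-series witness on $[0,\infty)$ (the paper does this via the decreasing local extrema of $J_0$ for the tail and a Lipschitz bound from $|J_1|\le 1/\sqrt2$ plus grid sampling on a compact interval). Your closing remarks about truncation and a posteriori certification are in the right spirit, but they do not substitute for producing the certificate itself.
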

Let~$A \subseteq \R^n$ be a measurable and periodic $1$-avoiding set
with periodicity lattice~$L \subseteq \R^n$. Its \textit{autocorrelation
  function} is the function~$f\colon \R^n \to \R$ defined by
\[
f(x) = \delta(A \cap (A - x)).
\]

Determining~$m_1(\R^n)$ is equivalent to the following optimization
problem: find a function~$f$ that maximizes~$f(0)$ and is the
autocorrelation function of a periodic $1$-avoiding set. The
difficulty here lies in the fact that we do not have a
 characterization of autocorrelation functions of
$1$-avoiding sets.  If we give up on finding autocorrelation
functions, but settle for functions satisfying a few of the
constraints that autocorrelation functions do, then we get a
\textit{relaxation} of our original problem and an upper bound
for~$m_1(\R^n)$.  The following lemma gives some such constraints.
Recall  that by $\alpha(G)$ we denote the
independence number of a graph $G$.

\begin{lemma}
\label{lem:constraints}
Let~$f$ be the autocorrelation function of a measurable and periodic
\hbox{$1$-avoiding} set~$A \subseteq \R^n$. Then:

\begin{enumerate}
\item $f(x) = 0$ if~$\|x\| = 1$;

\item if~$G = (V, E)$ is a finite, nonempty subgraph of the
  unit-distance graph of~$\R^n$, then
\[
\sum_{x \in V} f(x) \leq f(0) \alpha(G);
\]

\item if~$C \subseteq \R^n$ is a finite set of points, then
\[
\sum_{\{x,y\} \in {C\choose 2}} f(x-y) \geq |C| f(0) - 1,
\]
where~$C\choose 2$ is the set of all pairs of points in~$C$.
\end{enumerate}
\end{lemma}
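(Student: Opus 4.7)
The plan is to represent $f$ as an averaged product of shifted indicators on a fundamental domain of the periodicity lattice, and then to derive each of the three properties by a short averaging argument. Let $L$ be the periodicity lattice of $A$ and $F$ a fundamental domain; since $A \cap (A-x)$ is itself $L$-periodic,
\[
f(x) = \delta(A\cap(A-x)) = \frac{1}{\lm{F}} \int_F \one_A(t)\, \one_A(t+x)\, dt,
\]
and in particular $f(0) = \delta(A)$. Property~(1) is then immediate: if $\|x\|=1$ and $y \in A \cap (A-x)$, then $y$ and $y+x$ would be two points of $A$ at distance $1$, contradicting that $A$ is $1$-avoiding; hence $A\cap(A-x) = \emptyset$.

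For property~(2), I would observe that for every $t \in \R^n$ the set $(A-t) \cap V$ is independent in $G$: were $\{u,w\}$ an edge of $G$ with both $u+t$ and $w+t$ in $A$, then $u+t$ and $w+t$ would be two points of $A$ at unit distance. Summing the integral representation above over $v \in V$ and exchanging sum and integral gives
\[
\sum_{v \in V} f(v) = \frac{1}{\lm{F}} \int_F \one_A(t)\,|(A-t) \cap V|\, dt \le \frac{\alpha(G)}{\lm{F}} \int_F \one_A(t)\, dt = \alpha(G) f(0).
\]

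For property~(3), a translation change of variable, permitted by the $L$-periodicity of the integrand, rewrites $f(x-y) = (1/\lm{F})\int_F \one_A(t+x)\one_A(t+y)\, dt$. Summing over unordered pairs packages the integrand as $\binom{|(A-t)\cap C|}{2}$, yielding
\[
\sum_{\{x,y\}\in\binom{C}{2}} f(x-y) = \frac{1}{\lm{F}} \int_F \binom{|(A-t)\cap C|}{2}\, dt.
\]
I would then invoke the elementary inequality $\binom{k}{2} \ge k-1$, valid for every integer $k \ge 0$, together with the identity $(1/\lm{F}) \int_F |(A-t)\cap C|\, dt = |C| f(0)$ (obtained by interchanging sum and integral and using $L$-periodicity of $\one_A$) to conclude the stated bound. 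None of the three parts is genuinely difficult; the only mild subtlety is the periodicity-based variable shift in part~(3) which lets the double sum reassemble into a binomial coefficient inside the integral.
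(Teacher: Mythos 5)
Your proposal is correct and follows essentially the same route as the paper: part (1) is identical, part (2) is the paper's observation that no point lies in more than $\alpha(G)$ of the translates $A-v$, merely written as an integral over a fundamental domain instead of a density of a union, and part (3)'s pointwise inequality $\binom{k}{2}\geq k-1$ is exactly the truncated inclusion--exclusion bound $1\geq\delta\bigl(\bigcup_{x\in C}(A-x)\bigr)\geq\sum_x\delta(A-x)-\sum_{\{x,y\}}\delta((A-x)\cap(A-y))$ that the paper integrates. No substantive difference.
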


\begin{proof}
For~(1) it suffices to observe that, since~$A$ is $1$-avoiding,
if~$\|x\| = 1$ then $A \cap (A - x) = \emptyset$.

For~(2), we claim
that any~$z \in \R^n$ belongs to at most~$\alpha(G)$ of the
sets~$A - x$ for~$x \in V$. Indeed, say~$z$ belongs to all
sets~$A - x_i$ for~$\{x_1, \ldots, x_k\} \subseteq V$. This means that
$a_i-x_i=z$ with some $a_i\in A$. Now if $k > \alpha(G)$, then
there is a pair of points~$x_i$, $x_j$ adjacent in~$G$,
which means $ \|a_i - a_j\| = \|x_i - x_j\| = 1$, a contradiction.


This observation now gives
\[
\delta(A) \geq \delta\biggl(\bigcup_{x\in V} (A \cap (A - x))\biggr)
\geq \alpha(G)^{-1} \sum_{x\in V} \delta(A \cap (A - x)),
\]
as wanted.

Property~(3) is an application of the inclusion-exclusion principle. We have
\[
\begin{split}
1 &\geq \delta\biggl(\bigcup_{x \in C} A - x\biggr)\\
&\geq \sum_{x\in C} \delta(A - x) - \sum_{\{x,y\} \in {C\choose 2}}
\delta((A - x) \cap (A - y))\\
&=|{C}| \delta(A) - \sum_{\{x,y\} \in {C\choose 2}} \delta(A \cap (A -
(x - y))),
\end{split}
\]
and together with the definition of~$f$ we are done.
\end{proof}

Constraint~(2) was observed by Oliveira and Vallentin~\cite{OliveiraV2010}
in the case when the graph~$G$ is a clique. Constraint~(3) was used by
Székely~\cite{Szekely1984}. 

\begin{remark}\rm
The autocorrelation function encodes information about pairs of points
in the set. We may also consider higher-order correlation functions which
 encode information about tuples of points.

For every~$k \geq 0$, consider the function~$F_k\colon (\R^n)^k ×
\{0,1\}^{k+1} \to \R$ given by
\[
F_k((x_1, \dots x_k), (\e_0, \e_1, \dots, \e_k))=\delta(A^{\e_0}\cap
(A-x_1)^{\e_1}\cap \dots \cap (A-x_k)^{\e_k}),
\]
where the notation~$X^1 = X$ and~$X^0 = \R^n \setminus X$ is used.

These functions are nonnegative and satisfy the following
\textit{self-consistency relation}: for all~$k \geq 1$, $x_1$,
\dots,~$x_k$ and~$\e_0$, $\e_1$, \dots,~$\e_{k-1}$ we have
\begin{multline}
  F_{k-1}((x_1, \dots, x_{k-1}),(\e_0, \dots, \e_{k-1}))\\= F_{k}((x_1,
  \dots, x_{k-1}, x_k),(\e_0, \dots, \e_{k-1}, 0))\\+F_{k}((x_1,
  \dots, x_{k-1}, x_k),(\e_0, \dots, \e_{k-1}, 1)).
\end{multline}

Also, similarly to property (1) in the lemma above, we have
\[
F_{k}((x_1, \dots, x_k),(\e_0, \e_1, \dots, \e_{k}))=0
\]
whenever there exist indices $i$, $j$ such that $\|x_i-x_j\|=1$ and
$\e_i=\e_j=1$. This property and the self-consistency
relations together imply properties~(2) and~(3) in the lemma above (we omit
the proof of this fact). Therefore, one may regard the
self-consistency of higher-order correlation functions as the
common source of properties~(2) and~(3).\hfill$\diamond$
\end{remark}

\begin{remark}\rm
Using Kai Lai Chung's inequalities as Sz\'ekely and Wormald did in \cite{SzekelyW1989}, one can obtain a generalization of property (3) in Lemma \ref{lem:constraints}. Namely, for every $m\ge 1$ we have
$\sum_{\{x,y\} \in {C\choose 2}} f(x-y) \geq m|C| f(0) - \binom{m+1}{2}$ (and, in particular, $m=1$ yields property (3)). In principle, these inequalities could be used to further improve the upper bound on $m_1(\R^2)$, as was the case in higher dimensions in \cite{SzekelyW1989}. However, when we implemented the inequalities in the computer code (cf. the description of section \ref{informal} below), they did not yield any numerical improvement. \hfill$\diamond$
\end{remark}

In order to construct an optimization problem in which an
autocorrelation function of a $1$-avoiding set is to be found,
we shall parametrize such functions
via their Fourier series. This parametrization will also
suggest one further constraint satisfied by autocorrelation
functions. All the facts we state from harmonic analysis are quite
standard; the reader looking for reference is advised to consult the book by
Katznelson~\cite{Katznelson1968}.

Given measurable functions~$f$, $g\colon \R^n \to \C$, write
\begin{equation}
\label{eq:inner-p}
\langle f, g\rangle = \lim_{T \to \infty} \frac{1}{(2T)^n}
\int_{[-T,T]^n} f(x) \overline{g(x)}\, dx,
\end{equation}
when the limit exists.


Let~$L \subseteq \R^n$ be a lattice. A function~$f\colon \R^n  \to\C$ is \textit{periodic}, if
it is invariant under the action of~$L$: we have~$f(x + v) = f(x)$ for
all~$x \in \R^n$ and~$v \in L$. Lattice~$L$ is a \textit{periodicity
  lattice} of~$f$. Such functions are in a natural 1-1 correspondence with
functions $f\colon \R^n / L \to\C$; with an abuse of notation we will use the same
letter for both.

Now~\eqref{eq:inner-p} defines an inner product in the space of
square-integrable, complex-valued functions with periodicity
lattice~$L$, isomorphic to $L^2(\R^n / L)$. Equipped with this inner
product,~$L^2(\R^n / L)$ is a Hilbert space. Functions
\[
\chi_u(x) = e^{i u \cdot x}\qquad\text{for $u \in 2\pi L^*$,}
\]
where~$L^* = \{\, u \in \R^n : \text{$u\cdot v \in \Z$ for all~$v \in
  L$}\,\}$ is the \textit{dual lattice} of~$L$, form a complete
orthonormal system of~$L^2(\R^n / L)$.

The Fourier coefficient of~$f$ at~$u \in 2\pi L^*$
is~$\widehat{f}(u) = \langle f, \chi_u\rangle$.  Since the~$\chi_u$
form a complete orthonormal system, we have the Fourier inversion
formula
\[
f(x) = \sum_{u \in 2\pi L^*} \widehat{f}(u) e^{i u \cdot x},
\]
with~$L^2$ convergence.

Let~$A \subseteq \R^n$ be a measurable set with periodicity
lattice~$L$. We denote by~$\one_A $ the indicator function of~$A$, an $L$-periodic function.
 The autocorrelation function of~$A$ is
\[
f(x) = \langle \one_A, \one_{A - x} \rangle.
\]
Since~$f(x)$ is expressed in terms of the inner
product~\eqref{eq:inner-p}, and since~$\widehat{\one}_{A - x}(u) =
\widehat{\one}_A(u) e^{i u \cdot x}$, Parseval's identity gives
\begin{equation}
\label{eq:f-exp}
f(x) = \sum_{u\in 2\pi L^*} |\widehat{\one}_A(u)|^2 e^{i u \cdot x}
\end{equation}
with convergence for all~$x$. The inversion formula shows
that~$\widehat{f}(u) = |\widehat{\one}_A(u)|^2$. This gives
another constraint satisfied by an autocorrelation function, namely
that its Fourier coefficients are all nonnegative, in other words,
$f$ is \textit{positive definite}.

 We will \textit{radialize}~$f$ by averaging
 over the sphere or, equivalently, over the orthogonal group. In
other words, we set
\begin{equation}
\label{eq:radial}
\mathring{f}(x) = \frac{1}{\omega(S^{n-1})} \int_{S^{n-1}} f(\xi
\|x\|)\, d\omega(\xi) = \int_{\ort(\R^n)} f(T x)\, d\mu(T),
\end{equation}
where~$\omega$ is the surface measure on the unit
sphere~$S^{n-1} \subseteq \R^n$ and~$\mu$ is the normalized  Haar measure on the
orthogonal group~$\ort(\R^n)$.

This function $\mathring{f}$ is \textit{radial}, i.e., the value
of~$\mathring{f}(x)$ depends only on~$\|x\|$. (It is not periodic any more,
nor is it typically the autocorrelation function of a set.)

 Let~$\Omega_n$ be the
function defined over the nonnegative reals which is such that
\[
\Omega_n(\|x\|) = \frac{1}{\omega(S^{n-1})} \int_{S^{n-1}} e^{i x
  \cdot \xi}\, d\omega(\xi)
\]
for all~$x \in \R^n$.
Then, using the inversion formula we get
\[
\mathring{f}(x)=\frac{1}{\omega(S^{n-1})}
\int_{S^{n-1}} \sum_{u \in 2\pi L^*}
\widehat{f}(u) e^{i u \cdot \xi\|x\|}\, d\omega(\xi)=
\sum_{u \in 2\pi L^*}\widehat{f}(u)\Omega_n( \|u\| \|x\|).
\]
We can rewrite this as
\begin{equation}
\label{eq:rad-exp}
\mathring{f}(x) = \sum_{t \geq 0} \kappa(t) \Omega_n(t \|x\|),
\end{equation}
where~$\kappa(t)$ is the sum of~$\widehat{f}(u)$ over all~$u$ such
that~$\|u\| = t$. The sum in~\eqref{eq:rad-exp} is absolutely convergent:
 only countably many of the~$\kappa(t)$
coefficients are nonzero, they are nonnegative, their sum is $\delta(A)$ and the $\Omega$'s are bounded.

All constraints in Lemma~\ref{lem:constraints} are
rotation-invariant (inequality (2) for a fixed graph, or inequality (3) for a fixed set is not
invariant, but the property that they hold for all graphs or sets, respectively, is).
As the autocorrelation function of a
$1$-avoiding set satisfies these constraints, so does its
radialization. We list these properties in terms of the function $\kappa$.
Write $\delta=\delta(A)=f(0)$.

\begin{equation} \label{eq:prima}
\begin{array}{rll}
&\sum_{t \geq 0} \kappa(t) = \delta,\\[1ex]
&\sum_{t \geq 0} \kappa(t) \Omega_n(t) = 0,\\[1ex]
&\sum_{t \geq 0} \kappa(t) \sum_{x \in V(G)} \Omega_n(t\|x\|) \leq
 \delta \alpha(G)&\text{for all graphs $G$},\\[1ex]
&\sum_{t \geq 0} \kappa(t) \sum_{\{x,y\} \in {C\choose 2}}
  \Omega_n(t\|x-y\|) \geq \delta|C| - 1&\text{for all finite $C \subset \R^n$}.\\[1ex]
\end{array}
\end{equation}

 We also know that
\begin{equation} \label{eq:secunda}
\begin{array}{rll}
& \kappa(0)=\delta^2, \\
&\kappa(t) \geq 0&\text{for all~$t \geq 0$}.
\end{array}
\end{equation}

In the sequel we will use this (infinite) system of inequalities to get an
upper bound for $\delta$ and hence for $m_1(\R^n)$. We will use only a finite number of graphs and point configurations,
 in order to be able to check the properties of the "witness" function $W(t)$ described below.
The above inequalities are necessary but very likely
not sufficient for a funtion to be the radialization of an autocorrelation function, so probably this approach cannot
yield the best bound.

\begin{proposition}
\label{thm:lp-bound}
  Let~$\Scal$ be a finite collection of finite subgraphs of the
  unit-distance graph of~$\R^n$ and let~$\Ccal$ be a finite collection
  of finite sets of points in~$\R^n$. Suppose that the numbers~$v_0$,
  $v_1$, $w_G \geq 0$ for~$G \in \Scal$, and~$z_C \geq 0$
  for~$C \in \Ccal$ are such that the function
\begin{equation}\label{witness}
W(t) = v_0 + v_1 \Omega_n(t) + \sum_{G \in \Scal} w_G \sum_{x \in V(G)} \Omega_n(t\|x\|)-
\sum_{C \in \Ccal} z_C \sum_{\{x,y\} \in {C\choose 2}} \Omega_n(t\|x-y\|)
\end{equation}
satisfies $W(0) \geq 1$ and $W(t) \geq0$ for $t>0$.
Then $m_1(\R^n) \leq \delta$ where $\delta$ is the solution of the equation
\begin{equation}\label{deltaup}
\delta^2 = \delta \left( v_0 + \sum_{G \in \Scal} w_G \alpha(G) - \sum_{C \in \Ccal}
z_C  |C|  \right)\ +   \sum_{C \in \Ccal} z_C.
\end{equation}
\end{proposition}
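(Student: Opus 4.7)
The plan is to pair the witness inequality $W(t) \geq 0$ against the nonnegative coefficients $\kappa(t)$ arising from the radialization of the autocorrelation function of an arbitrary measurable, periodic $1$-avoiding set $A \subseteq \R^n$, in the spirit of linear programming duality. Write $\delta = \delta(A)$. By~(\ref{eq:secunda}) the $\kappa(t)$ are nonnegative, by~(\ref{eq:rad-exp}) only countably many are nonzero, and by the first line of~(\ref{eq:prima}) they sum to $\delta$, so $\sum_{t \geq 0} \kappa(t) W(t)$ is absolutely convergent and can be manipulated termwise.

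First I would bound this pairing from below. Because $W(t) \geq 0$ for $t > 0$, $W(0) \geq 1$, and $\kappa(0) = \delta^2$, one has
\[
\sum_{t \geq 0} \kappa(t) W(t) \;\geq\; \kappa(0) W(0) \;\geq\; \kappa(0) \;=\; \delta^2.
\]

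Next I would bound the same pairing from above by substituting the defining expression~(\ref{witness}) of $W$ and applying the four items of~(\ref{eq:prima}) termwise: the $v_0$ piece yields $v_0\,\delta$; the $v_1\,\Omega_n(t)$ piece vanishes by the second identity of~(\ref{eq:prima}); each graph term, weighted by $w_G \geq 0$, is at most $w_G\,\delta\,\alpha(G)$; and each point-configuration term, after flipping the inequality through the factor $-z_C \leq 0$, contributes at most $-z_C(\delta |C| - 1)$. Summing gives the quadratic inequality
\[
\delta^2 \;\leq\; \delta\Bigl(v_0 + \sum_{G \in \Scal} w_G \alpha(G) - \sum_{C \in \Ccal} z_C |C|\Bigr) + \sum_{C \in \Ccal} z_C,
\]
which forces $\delta(A)$ to be at most the (unique) nonnegative root of~(\ref{deltaup}). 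Since periodic measurable $1$-avoiding sets have densities arbitrarily close to $m_1(\R^n)$, as recalled in \textsection\ref{sec:prelim}, the bound transfers to $m_1(\R^n)$.

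The step most in need of care is the termwise expansion of $\sum_t \kappa(t) W(t)$, which requires interchanging the outer sum over $t$ with the finite sums over $\Scal$ and $\Ccal$ and with the inner sums over vertices of each $G$ and pairs in each ${C \choose 2}$. This is routine: $|\Omega_n| \leq 1$, the families $\Scal$, $\Ccal$ and every $V(G)$, ${C \choose 2}$ are finite, and $\sum_t \kappa(t) = \delta < \infty$ with $\kappa \geq 0$, so Tonelli's theorem applies. Solving the resulting quadratic is trivial because the constant term $\sum_C z_C \geq 0$ guarantees that~(\ref{deltaup}) has a unique nonnegative root, and that root dominates every nonnegative $\delta$ satisfying the displayed inequality.
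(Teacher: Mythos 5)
Your proposal is correct and follows essentially the same route as the paper: pair the nonnegative coefficients $\kappa(t)$ against the witness $W$, bound the pairing below by $\kappa(0)=\delta^2$ using $W(0)\geq 1$ and $W(t)\geq 0$, and above by applying the constraints \eqref{eq:prima} termwise, yielding the quadratic inequality whose larger root bounds $\delta$. The convergence and duality details you spell out are exactly what the paper leaves implicit.
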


\begin{proof}

With any function $W$ satisfying
$W(0) \geq 1$ and $W(t) \geq0$ for $t>0$ we have
\begin{equation}\label{alpha0}
 \delta^2 = \kappa(0) \leq \sum_{t \geq 0} \kappa(t) W(t).
\end{equation}
If $W$ is in the form \eqref{witness} then inequalities \eqref{eq:prima} and \eqref{alpha0} imply
\begin{equation}\label{deltaup1}
\delta^2 \leq \delta \left( v_0 + \sum_{G \in \Scal} w_G \alpha(G) - \sum_{C \in \Ccal}
z_C  |C|  \right)\ +   \sum_{C \in \Ccal} z_C.
\end{equation}
\end{proof}

\subsection{Applying Proposition~\ref{thm:lp-bound} for the Euclidean plane}\label{informal}

In this section we will explain informally how one can look for
good collections~$\Scal$ and~$\Ccal$. Our approach for this is
experimental.  We then give explicit values for~$v$, $w$, and~$z$ in subsection \ref{values}, and
show how it can be verified that the conditions of
Proposition~\ref{thm:lp-bound} are satisfied.

As we will need to work with the function $\Omega_2(t)$, recall that ~$\Omega_n$ has an expression in terms of Bessel
functions, namely
\[
\Omega_n(t) = \Gamma\Bigl(\frac{n}{2}\Bigr)
\Bigl(\frac{2}{t}\Bigr)^{(n-2)/2} J_{(n-2)/2}(t)
\]
for~$t > 0$ and~$\Omega_n(0) = 1$, where~$J_\alpha$ is the Bessel
function of the first kind with parameter~$\alpha$; this formula was
first observed by Schoenberg~\cite{Schoenberg1938}.

In trying to apply Proposition \ref{thm:lp-bound} we can start with ~$\Scal = \Ccal = \emptyset$. In that case $W(t)= v_0 + v_1 \Omega_2(t)$, and the best upper bound that can be achieved this way is~$\approx 0.287$, which is slightly worse than the bound of~$2/7 \approx 0.285$
coming from the Moser spindle (cf. the Introduction). Oliveira and Vallentin~\cite{OliveiraV2010} took ~$\Ccal = \emptyset$ and ~$\Scal$ to be a few equilateral triangles in~$\R^2$ (at appropriate positions). This
provided an upper bound of~$\approx 0.268$, which was better than the
previously known best upper bound of~$\approx 0.279$ due to
Székely~\cite{Szekely1984}.

A further improvement can be obtained if one takes ~$\Ccal = \emptyset$ and ~$\Scal$ to be a few
congruent copies of the Moser spindle at appropriate positions -- as explained here. Let~$G$ be the Moser spindle as
shown in Figure~\ref{fig:moser}, with the lower-left vertex placed at
the origin. Consider congruent copies of~$G$ of the form
\begin{equation}
\label{eq:mosers}
(t, 0) + R(\theta) G,
\end{equation}
where~$t \in \R$, $\theta \in [0, 2\pi]$, and
\[
R(\theta) = \begin{pmatrix}
\cos \theta &-\sin \theta \\
\sin \theta &\cos \theta
\end{pmatrix}
\]
is a rotation matrix. In order to get a finite number of copies we discretize the values of ~$t$ and ~$\theta$. Let $\e=0.1$ and consider $-4\leq t=j\e\leq 4$, and $0\leq \theta=k\e \leq 2\pi$ for $j,k\in \Z$. As a first step we take
~$\Scal$ to be all these copies of~$G$.

Instead of looking for a witness function $W(t)$ directly, we will consider the
system of inequalities \eqref{eq:prima} and \eqref{eq:secunda}. We introduce the normalized variables
$\tilde{\kappa}(t)=\kappa(t)/\delta$, and write the system \eqref{eq:prima}, \eqref{eq:secunda} as a linear programming problem (of infinitely many variables $\tilde\kappa(t)$) for maximizing $\tilde\kappa(0)$:

\begin{equation}
\label{eq:primal}
\begin{array}{rll}
\sup&\tilde\kappa(0)\\
&\sum_{t \geq 0} \tilde\kappa(t) = 1,\\[1ex]
&\sum_{t \geq 0} \tilde\kappa(t) \Omega_n(t) = 0,\\[1ex]
&\sum_{t \geq 0} \tilde\kappa(t) \sum_{x \in V(G)} \Omega_n(t\|x\|) \leq
 \alpha(G)&\text{for~$G \in \Scal$},\\[1ex]
&\sum_{t \geq 0} \tilde\kappa(t) \sum_{\{x,y\} \in {C\choose 2}}
  \Omega_n(t\|x-y\|) \geq |C| - \delta^{-1}&\text{for~$C \in
                                             \Ccal$},\\[1ex]
&\tilde\kappa(t) \geq 0&\text{for all~$t \geq 0$}.
\end{array}
\end{equation}

If for any value of $\delta$ we obtain $\sup \tilde\kappa(0)=\rho$ as the solution of this linear programming problem, and $\rho\leq \delta$, then we should be able to find a witness function $W(t)$ in the form \eqref{witness} by linear duality. In trying to solve this primal problem~\eqref{eq:primal} an
issue is that we have infinitely many variables $\tilde\kappa(t)$. The workaround
is to pick numbers~$L > 0$ and~$\varepsilon > 0$ and then only use
variables~$\kappa(t)$ for~$t$ of the form~$k\varepsilon \leq L$
with~$k \geq 0$ integer.

By taking~$L = 200$,~$\varepsilon = 0.01$ and~$\delta= 0.26305$ we get~$\sup \tilde\kappa(0)\approx\delta=0.26305$. Therefore, a witness function
$W(t)= v_0 + v_1 \Omega_n(t) + \sum_{G \in \Scal} w_G \sum_{x \in V(G)} \Omega_n(t\|x\|)$ testifying $m_1(\R^n) \lessapprox 0.26305$ should exist by linear duality (in fact, we must expect a little loss because of the discretization in the values of $t$).
Note that this bound is already better than that
of Oliveira and Vallentin~\cite{OliveiraV2010}, ~$m_1(\R^n) \leq 0.268$, using equilateral triangles. Also, from the solution of the linear
program \eqref{eq:primal} we can see that only a few of the constraints coming from
copies of the Moser spindle are actually used; the rest may be discarded. In fact we will only keep three Moser spindles, as given in \eqref{spindles} below.

Now we can try to add inclusion-exclusion constraints. (Note that we have not
produced any witness function $W(t)$ yet, we have been working with the variables $\tilde\kappa(t)$. We will keep doing so, and only turn to
constructing $W(t)$ at the end.)

Let~$\tilde\kappa(t)$ be
an optimal solution of the discretized linear program above (containing the constraints coming from the three copies of the Moser spindle given in \eqref{spindles}). Temporarily, we will fix these values $\tilde\kappa(t)$ and fix~$N>0$.
(In our
experiments the only value that led to improvements was ~$N = 6$.)
We want to find a set~$C$ of~$N$ points in~$\R^2$ which
minimizes the left-hand side of the inclusion-exclusion constraint
in~\eqref{eq:primal}. Due to translation invariance, we can assume
that the origin belongs to~$C$. We can consider the coordinates of the other~$N-1$
points as variables, and we try to
minimize the left-hand side of the constraint using
some numerical nonlinear optimization method. This will give us a set~$C$ of points. To the right hand side we substitute the actual value of~$\delta=0.26305$, and see whether the constraint is violated. If it is, then the inclusion of $C$ in $\Ccal$ will give us an improved bound on $m_1(\R^n)$. Namely, we add this new constraint to the linear programming problem \eqref{eq:primal}, and find the value of $\delta$ such that the optimum satisfies $\sup \tilde\kappa(0)=\delta$ (note here that $\sup \tilde\kappa(0)$ depends on $\delta$ because the right hand side of the added constraint depends on $\delta$). By linear duality, we can then hope to find a witness $W(t)$ which testifies $m_1(\R^n)\lessapprox \delta$ (again, a little loss must be expected because of the discretization in $t$).

This procedure can be iterated: we consider an optimal solution $\tilde\kappa(t)$, and
find another set~$C$ of points that minimizes the left-hand side of
the inclusion-exclusion constraint. And we repeat. After a few
iterations, we start to reach the limit of this approach.

Finally we get a dual solution of the resulting linear
program~\eqref{eq:primal}, which gives us the numbers~$v_0$,
$v_1$, $w_G$ and~$z_C$ to be substituted in
Proposition~\ref{thm:lp-bound}. These numbers will nearly satisfy the conditions of the theorem if our
discretization was fine enough. After some minor adjustments in these values (incurring a little loss in the value of $\delta$) we can
actually verify that they satisfy the conditions, and we obtain a valid bound for $m_1(\R^n)$.

\subsection{Explicit values and verification}\label{values}

We now present candidate values for~$v$, $w$, and~$z$ for Proposition~\ref{thm:lp-bound},
 and then show how to modify these values
slightly so that they satisfy the conditions of the theorem.

For us,~$\Scal$ is composed of three copies of Moser's spindle,~$G_1$,
$G_2$, and~$G_3$ given as in~\eqref{eq:mosers}, corresponding to the
following pairs~$(t, \theta)$:
\begin{equation}\label{spindles}
\text{$(0.4, 5.4)$, $(0.6, 5.4)$, and~(0.8, 5.4)}.
\end{equation}
The collection~$\Ccal$ we use is given in Table~\ref{tab:ccal}.

\begin{table}[htb]
\bgroup
\setbox0=\hbox{$(0)$}
\newdimen\h \h=\ht0
\newdimen\d \d=\dp0

\advance\h by 6pt
\advance\d by 6pt

\def\hr{\vrule height\h width0pt depth0pt}
\def\dr{\vrule height0pt width0pt depth\d}
\hbox to\hsize{\hss
\begin{tabular}{ll|ll}
\hline
\hr&$(\phantom{-}0.781846561681, \phantom{-}0.923983014983)$&&$(\phantom{-}0.533352656963, \phantom{-}0.891484779083)$\\
&$(-1.493218191370, \phantom{-}0.715876600816)$&&$(-0.611400296245, \phantom{-}0.779442549608)$\\
$C_1$&$(-0.413794012440, \phantom{-}0.699470697606)$&$C_4$&$(-0.285536136585, \phantom{-}1.699218505820)$\\
&$(-1.195640884520, -0.224511807288)$&&$(\phantom{-}0.251297714466, \phantom{-}0.991412992863)$\\
\dr&$(\phantom{-}1.079423910680, -0.016405441239)$&&$(\phantom{-}0.680196169514, \phantom{-}1.919904450610)$\\
\hline
\hr&$(\phantom{-}0.976422451180, \phantom{-}0.219342709492)$&&$(\phantom{-}0.665906520384, \phantom{-}2.751699047290)$\\
&$(-0.896557239530, \phantom{-}0.403173339690)$&&$(-1.358694685180, \phantom{-}1.253666844760)$\\
$C_2$&$(-0.552919373209, \phantom{-}1.316137405620)$&$C_5$&$(-0.448270339088, \phantom{-}0.880354589520)$\\
&$(\phantom{-}0.051861274857, \phantom{-}0.567345039740)$&&$(-0.943967767036, \phantom{-}0.337966779380)$\\
\dr&$(\phantom{-}0.386966521215, \phantom{-}1.028078255990)$&&$(\phantom{-}1.397952081510, \phantom{-}2.088390922180)$\\
\hline
\hr&$(\phantom{-}0.951509148625, \phantom{-}0.297382071175)$&&\\
&$(-0.856129318724, \phantom{-}0.498561149113)$&&\\
$C_3$&$(-0.613074338850, \phantom{-}1.455125134570)$&&\\
&$(\phantom{-}0.035657780606, \phantom{-}0.706699050884)$&&\\
\dr&$(\phantom{-}0.297303535201, \phantom{-}1.051961996200)$&&\\
\hline
\end{tabular}
\hss}
\egroup
\bigskip

\caption{Collection~$\Ccal$ of point-sets used.
   Each of the sets also contains the
  origin~$(0, 0)$, so each set has~$6$ points.}
\label{tab:ccal}
\end{table}

We set the values of~$v$, $w$, and~$z$ to:
\begin{equation}
\label{eq:sol}
\begin{array}{lr}
v_0& 2.3022516897351055\\
v_1&27.2729338671989154\\
w_1& 0.2021538298582705\\
w_2& 0.4311844458316473\\
w_3& 1.3855315999360112\\
z_1& 0.2862826361013497\\
z_2& 0.7908579212800153\\
z_3& 0.9616086568833265\\
z_4& 0.2772120180959884\\
z_5& 0.5311904133936868
\end{array}
\end{equation}
Here,~$w_i$ is associated with graph~$G_i$ in~$\Scal$, and
similarly~$z_i$ is associated with configuration~$C_i \in \Ccal$.

Recall that~$\Omega_2(t) = J_0(t)$ and let
\[
\varphi(t) = v_0 + v_1 J_0(t) + \sum_{i=1}^3 w_i \sum_{x
  \in V(G_i)} J_0(t\|x\|)
-\sum_{i=1}^5 z_i \sum_{\{x,y\} \in {C_i\choose 2}}
J_0(t\|x-y\|).
\]
The conditions required of~$v$, $w$, and~$z$ in
Proposition~\ref{thm:lp-bound} now translate to~$\varphi(0) \geq 1$
and~$\varphi(t) \geq 0$ for all~$t > 0$.

It is easy to check that~$\varphi(0) \geq 1$. To show
that~$\varphi(t) \geq 0$ for all~$t > 0$, the first step is to notice
that
\[
\lim_{t\to\infty} J_0(t) = 0.
\]
This follows from the asymptotic formula for~$J_\alpha$ for~$\alpha
\geq 0$ (cf.~Watson~\cite{Watson1922}, equation~(1) in §7.21). So we see
that
\[
\lim_{t\to\infty} \varphi(t) = v_0,
\]
and so~$\varphi(t) \geq 0$ for all large enough~$t$.

We need an estimate on how large~$t$ has to be
chosen
and this we can
get as follows. We have
\[
\frac{d J_0(t)}{dt} = -J_1(t).
\]
Let~$j_1 < j_2 < j_3 < \cdots$ be the positive zeros of~$J_1$. By
the above expression for the derivative, these are the places of  local extrema
of~$J_0$.  The local extrema of~$J_0$ decrease in absolute value
(cf.~Watson~\cite{Watson1922}, §15.31), that is
\[
|J_0(j_1)| > |J_0(j_2)| > |J_0(j_3)| > \cdots.
\]
 Hence to find an upper bound on the absolute value
of~$J_0(t)$ for~$t \geq L$ it is sufficient to find the rightmost
zero of~$J_1$ in the interval~$[0, L]$ and compute~$J_0$ at this
zero. There are procedures to compute the zeros of~$J_1$ to any
desired precision.

Using this idea, we may check that for~$\Scal$ and~$\Ccal$ as we have
the absolute value of
\[
v_1 J_0(t) + \sum_{i=1}^3 w_i \sum_{x
  \in V(G_i)} J_0(t\|x\|)
-\sum_{i=1}^5 z_i \sum_{\{x,y\} \in {C_i\choose 2}}
J_0(t\|x-y\|)
\]
for~$t \geq 779.8998\ldots$ (this is the 248th positive zero of~$J_1$)
is at most~$v_0 - 0.05 \approx 2.2522$. Consequently~$\varphi(t) \geq
0$ for all~$t \geq L$ with~$L = 780$.

Now we  check whether~$\varphi(t) \geq 0$ in~$[0, L]$. This will
not be the case: since our solution has been found numerically via
sampling, it will be negative at some points. But it will only be
slightly negative, and then adding a small number to~$v_0$ will make
it nonnegative everywhere.


Recall that the derivative of~$J_0$ is~$-J_1$.
Since~$|J_1(t)| \leq 1/\sqrt{2}$ for all~$t \geq 0$
(cf.~Watson~\cite{Watson1922}, equation~(10) in~§13.42), we can provide a
rough estimate for~$|\varphi'(t)|$, namely
\[
|\varphi'(t)| \leq 75.9547\qquad\text{for all~$t \geq 0$}.
\]
Then the mean-value theorem implies that
\[
|\varphi(t_1) - \varphi(t_2)| \leq 75.9547 |t_1 - t_2|
\]
for every~$t_1$, $t_2 \geq 0$.

If for a prescribed~$\varepsilon > 0$ we compute~$\varphi(t)$ for
all~$t = k \varepsilon / 76 \leq L$ with~$k \geq 0$ integer and take
the minimum, this gives the
minimum of~$\varphi$ in~$[0, L]$ up to an additive error
of~$\varepsilon$.

Taking~$\varepsilon = 10^{-4}$, we obtain the conservative estimate
that the minimum of~$\varphi$ in~$[0, L]$ is at
least~$-0.00011$. Adding this to~$v_0$ we then get numbers~$v$, $w$,
and~$z$ that satisfy the conditions of Proposition~\ref{thm:lp-bound}.
  Solving the quadratic inequality we obtain
$\delta \leq 0.258795$, an upper bound
for~$m_1(\R^2)$.

We have attempted to include more constraints from Moser spindles
after the 6-point configurations were added. This improves the bound slightly, but
not much. Attempts to add more 6-point configurations have run into
numerical trouble, and we could not derive a rigorous bound from such
trials. They suggest that better bounds can be achieved, but we never
managed to get below~$0.257$, which is probably the limit of this method.

The verification procedure we just described was implemented in a
Sage~\cite{SAGE} script that is available together with the arXiv
version of this paper. It is a short program that can be easily
checked by the reader. Numerical computations are still carried out to
compute Bessel functions, but due to the simplicity of the code, one
can have a high degree of confidence on the results obtained. A fully
rigorous verification procedure would require the use of rational
arithmetic and this would require Bessel functions to be computed up
to good precision using rationals. This is not hard to
implement, but in our view it is not necessary for the present result.

\medskip

\begin{center}
Acknowledgement
\end{center}

The authors are grateful to the referees whose valuable suggestions helped to improve the presentation of the paper.

\end{document}